\documentclass[10pt]{article}
\usepackage{amsmath}
\usepackage{amssymb}
\usepackage{amsthm}
\usepackage{lineno}
\usepackage[mathscr]{eucal}
\usepackage{xcolor}
\theoremstyle{definition}
\newtheorem{definition}{Definition}[section]
\newtheorem{theorem}[definition]{Theorem}
\newtheorem*{theorem*}{Conjecture}
\newtheorem{proposition}[definition]{Proposition}

\theoremstyle{remark}
\newtheorem{remark}[definition]{Remark}

\newcounter{enumctr}

%
% mathematical symbols
%

\newcommand{\N}{\mathbb{N}}
\newcommand{\R}{\mathbb{R}}

\setlength{\parindent}{0cm}
\setlength{\parskip}{2ex}

\begin{document}

%\linenumbers

\title{\vspace*{-10mm}
On the existence and uniqueness of weak solutions to time-fractional elliptic equations with time dependent variable coefficients}
\author{H.T.~Tuan\footnote{\tt httuan@math.ac.vn, \rm Institute of Mathematics, Vietnam Academy of Science and Technology, 18 Hoang Quoc Viet, 10307 Ha Noi, Viet Nam}}
%\date{}
\maketitle
\begin{abstract}
This paper is devoted to discussing the existence and uniqueness of weak solutions to time-fractional elliptic equations having time-dependent variable coefficients. To obtain the main result, our strategy is to combine the Galerkin method, a basic inequality for the fractional derivative of convex Lyapunov candidate functions, the Yoshida approximation sequence and the weak compactness argument.
\end{abstract}
{{\emph {Keywords and phases}}: Time fractional derivatives, Existence and uniqueness, Weak solution, Elliptic equations, Variable coefficients}
\section{Introduction}
Diffusion equations with fractional-order derivatives in time (which is called as time-fractional diffusion equations) have been introduced in Physics by Nigmatullin \cite{Nigmatullin_86} to describe super slow diffusion process in a porous medium with the structure type of fractal geometry (Koch's tree). Then, by the probabilistic point of view, in the paper \cite{Metzler_00}, Metzler and Klafter have pointed out that a time-fractional diffusion equation generates a non-Markovian diffusion process with a long memory. After that, Roman and Alemany \cite{Roman_94} have considered continuous-time random walks on fractals and observed that the average probability density of random walks on fractals obeys a diffusion equation with a fractional time derivative asymptotically. Another context where such systems appear is the modelling of evolution processes in materials with memory, see e.g., \cite{Pruss, Caputo_99}.

The existence of solutions to time-fractional partial differential equations has been studied by many authors. In \cite{Kochubei_04}, using Fourier transform, the authors have obtained a fundamental solution for time-fractional elliptic equations with smooth coefficient. Combining the Galerkin method and the Yoshida approximation sequence, in \cite{Zacher_09},  the author has proposed a way to prove existence of certain weak solutions to abstract evolutionary integro-differential equations in Hilbert spaces. By virtue of the operator theory in functional analysis and  the eigenfunction expansion method for symmetry elliptic operators, in \cite{Yamamoto_11}, Sakamoto and Yamamoto have proved  the existence and uniqueness of the weak solution for a fractional diffusion-wave equation. Based on a definition of the Caputo derivative on a finite interval in fractional Sobolev spaces, Gorenflo, Luchko and Yamamoto  \cite{Gorenflo_15} have investigated solutions (in the distribution sense) to time-fractional diffusion equations from the operator theoretic viewpoint. In the recent works, by combining a special approximate solution with the Galerkin approximation method, Kubica and Yamamoto \cite{Adam} have proved the existence of weak solutions to a large class of fractional diffusion equations with coefficients are dependent on spatial and time variables, by a classical variational approach,  K.V. Bockstal \cite{Bockstal, Bockstal_1} has established the existence of a unique weak solution to a certain class of fractional diffusion equations with Caputo derivative.

However, to the best of our knowledge, the development of this theory is still in its infancy and requires further researches. The main difficulty which one have to face is the meaning of the initial condition of solutions and the correctness of the formulation of weak solutions. These are due to the fact that the solutions of time-fractional partial differential equations with with time-dependent coefficients are not in standard Sobolev spaces and so classical embedding theorems in functional analysis do not apply. In this paper, we only focus on an initial-boundary value problem with the zero initial condition for time-fractional elliptic equations having time dependent variable coefficients which can not apply Fourier transformation to solve them. To overcome the aforementioned obstacles, our strategy is to use the Galerkin method, a basic inequality for the fractional derivative of convex Lyapunov candidate functions, the Yoshida approximation sequence and the weak compactness argument.

The paper is organized as follows. In section 2, we recall some preliminary results on fractional calculus. Then, we give the setting of the problem and propose a clear definition of a weak solution to a time fractional elliptic equation with time dependent variable coefficients. The main result of the paper is Theorem 3.4 on the existence and uniqueness of weak solutions introduced in section 3.

To conclude the introduction, we will introduce some notations used throughout the rest of the article. Denote by $\N$ the set of natural numbers and by $\R$ the set of real numbers. For any natural $d\in \N$, let $\R^d$ be the $d$-dimensional Euclidean space. For a open subset $\Omega$ of $\R^d$, let $C^\infty_c(\Omega)$ denote the space of infinitely differentiable functions $f: \Omega\rightarrow \R$ with the compact support in $\Omega$, $L^p(\Omega)$, $p\in \N$, be the set of measurable functions $f:\Omega\to\R$ such that
\[
\int_\Omega |f(x)|^pdx<\infty,
\]
$H^1(\Omega)$ be the Sobolev space containing all locally integrable functions $f:\Omega\to \R$ such that $f$ and its weak derivatives belong to $L^2(\Omega)$, $H^1_0(\Omega)$ be the closure of $C^\infty_c(\Omega)$ in $H^1(\Omega)$, and $H^{-1}(\Omega)$ be the dual space of $H^1_0(\Omega)$. Fix $T>0$, denote by $W^1_1([0,T];\R)$ the space of functions $f:[0,T];\R$ such that $f$ and its weak derivative belong to the space $L^1([0,T];\R)$ and $W^1_2([0,T];H^{-1}(\Omega))$ by the space of functions $f:[0,T]\rightarrow H^{-1}(\Omega)$ such that $f$ and the weak derivative belong to the space $L^2([0,T];H^{-1}(\Omega))$.
\section{Fractional calculus}
We briefly recall an abstract framework of fractional calculus.
%----------------------------
% Fractional calculus
%-----------------------------

Let $\alpha\in (0,1]$, $[0,T]\subset \R$ and $x:[0,T]\rightarrow \R$ be a measurable function such that $\int_0^T|x(\tau)|\;d\tau<\infty$. The right-handed \emph{Riemann--Liouville integral operator of order $\alpha$} is defined by
\[
(I_{0+}^{\alpha}x)(t):=\frac{1}{\Gamma(\alpha)}\int_0^t(t-\tau)^{\alpha-1}x(\tau)\;d\tau,
\]
where $\Gamma(\cdot)$ is the Gamma function. The left-handed \emph{Riemann--Liouville integral operator of order $\alpha$} is defined by
\[
(I_{T^-}^{\alpha}x)(t):=\frac{1}{\Gamma(\alpha)}\int_t^T(\tau-t)^{\alpha-1}x(\tau)\;d\tau.
\]
We have the following result on the relation between the right-handed and left-handed Riemann--Liouville integral operators.
\begin{theorem}[The Hardy--Littlewood form of fractional integration by parts]
If $p>1$, $q>1$, $0<\alpha<1$, $\frac{1}{p}+\frac{1}{q}-1\leq \alpha$, and \[ f\in L^p([0,T];\R),\;g\in L^q([0,T];\R),
\]
then
\[
\int_0^T (I^\alpha_{0+} f(t))g(t)dt=\int_0^T f(t)I^\alpha_{T^-}g(t)dt. 
\]
\end{theorem}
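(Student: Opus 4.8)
The plan is to reduce the identity to a single application of Fubini's theorem; the only genuine work is verifying that the relevant double integral converges absolutely, and this is exactly where the hypothesis $\frac1p+\frac1q-1\le\alpha$ is used.

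First I would rewrite the left-hand side as an integral over the triangle $\Delta=\{(\tau,t):0\le\tau\le t\le T\}$:
\[
\int_0^T (I^\alpha_{0+}f(t))\,g(t)\,dt=\frac{1}{\Gamma(\alpha)}\int_0^T\!\!\int_0^t (t-\tau)^{\alpha-1}f(\tau)\,g(t)\,d\tau\,dt .
\]
Granting absolute integrability on $\Delta$, Fubini's theorem permits integrating in $t$ first, over the slice $\{t:\tau\le t\le T\}$ for each fixed $\tau$, which yields
\[
\frac{1}{\Gamma(\alpha)}\int_0^T f(\tau)\Big(\int_\tau^T (t-\tau)^{\alpha-1}g(t)\,dt\Big)\,d\tau=\int_0^T f(\tau)\,(I^\alpha_{T^-}g)(\tau)\,d\tau,
\]
which is the claimed identity. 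So everything comes down to the bound
\[
\int_\Delta (t-\tau)^{\alpha-1}\,|f(\tau)|\,|g(t)|\,d(\tau,t)=\int_0^T (I^\alpha_{0+}|f|)(t)\,|g(t)|\,dt<\infty .
\]

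To prove this bound, let $q'=q/(q-1)$ be the conjugate exponent of $q$, so $\frac1{q'}=1-\frac1q$. By Hölder's inequality it suffices that $I^\alpha_{0+}|f|\in L^{q'}([0,T];\R)$. This is the Hardy--Littlewood--Sobolev fractional integration inequality: $I^\alpha_{0+}$ maps $L^p([0,T];\R)$ boundedly into $L^s([0,T];\R)$ whenever $\frac1s\ge\frac1p-\alpha$ (on the bounded interval $[0,T]$ the sub-endpoint cases follow from the endpoint estimate together with $L^r([0,T])\subset L^s([0,T])$ for $s\le r$, and when $\alpha\ge\frac1p$ one even has $I^\alpha_{0+}|f|\in C([0,T])\subset L^{q'}$). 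Choosing $s=q'$, the condition $\frac1{q'}\ge\frac1p-\alpha$ rearranges to $1-\frac1q\ge\frac1p-\alpha$, i.e. $\alpha\ge\frac1p+\frac1q-1$, which is precisely the hypothesis; hence $\|I^\alpha_{0+}|f|\|_{L^{q'}}\le C\,\|f\|_{L^p}<\infty$, and the double integral converges.

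With absolute integrability established, Fubini applies and the computation above gives the result. The step I expect to be the main obstacle is invoking the correct sharp form of the Hardy--Littlewood--Sobolev estimate, in particular handling the borderline exponent $\frac1{q'}=\frac1p-\alpha$ and the regime $\alpha>\frac1p$. An alternative that sidesteps the sharp estimate is to first prove the identity for $f,g\in C([0,T];\R)$, where absolute integrability of $(t-\tau)^{\alpha-1}|f(\tau)||g(t)|$ on $\Delta$ is immediate, and then pass to general $f\in L^p$, $g\in L^q$ by a density argument using the boundedness of $I^\alpha_{0+}\colon L^p\to L^{q'}$ and $I^\alpha_{T^-}\colon L^q\to L^{p'}$; but the direct Fubini argument is the cleaner route.
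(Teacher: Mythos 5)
Your argument is correct in substance, but be aware that the paper does not actually prove this theorem: its ``proof'' is the single line ``See \cite{LY_37}'', deferring entirely to Love and Young. What you have written is essentially the standard argument behind that reference: Tonelli/Fubini on the triangle $\{(\tau,t):0\le\tau\le t\le T\}$, with the hypothesis $\frac1p+\frac1q-1\le\alpha$ entering only to guarantee absolute convergence of the double integral, via the Hardy--Littlewood theorem on fractional integration ($I^{\alpha}_{0+}\colon L^p\to L^{s}$ with $\frac1s=\frac1p-\alpha$ when $1<p<\frac1\alpha$) combined with H\"older against $g\in L^q$; the rearrangement $\frac1{q'}\ge\frac1p-\alpha\iff\alpha\ge\frac1p+\frac1q-1$ is exactly right. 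So you supply a self-contained proof where the paper supplies none, which is a gain rather than a divergence. One small inaccuracy to repair: in the regime $\alpha\ge\frac1p$ you assert $I^\alpha_{0+}|f|\in C([0,T])$, which holds for $\alpha>\frac1p$ (indeed $I^\alpha_{0+}$ maps $L^p$ into H\"older-continuous functions of order $\alpha-\frac1p$) but fails at the critical exponent $\alpha=\frac1p$, where $I^\alpha_{0+}|f|$ need not be bounded. This does not damage the proof, because at $\alpha=\frac1p$ the Hardy--Littlewood theorem still gives $I^\alpha_{0+}|f|\in L^{s}([0,T])$ for every finite $s$, and $q'<\infty$ since $q>1$; you should simply treat the three cases $\alpha<\frac1p$, $\alpha=\frac1p$, $\alpha>\frac1p$ separately rather than merging the last two.
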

\begin{proof}
See \cite{LY_37}.
\end{proof}
The right-handed \emph{Riemann--Liouville fractional derivative} $^{RL\!}D_{a+}^\alpha x$ of $x$ on $[0,T]$ is defined by
\[
^{RL\!}D_{0+}^\alpha x(t):=(DI_{0+}^{1-\alpha}x)(t)\;\text{for almost}\;t\in [0,T],
\]
where $D=\frac{d}{dt}$ is the usual derivative. The left-handed \emph{Riemann--Liouville fractional derivative} $^{RL\!}D_{T^-}^\alpha x$ of $x$ on $[0,T]$ is defined by
\[
^{RL\!}D_{T^-}^\alpha x(t):=-(DI_{T^-}^{1-\alpha}x)(t)\;\text{for almost}\;t\in [0,T].
\]
\begin{theorem}\label{integration_by_parts}
The formula
\[
\int_0^T f(t)^{RL}D^\alpha_{0+}g(t)dt=\int_0^T g(t)^{RL}D^\alpha_{T^-}f(t)dt
\]
is valid for $0<\alpha<1$, $f\in I^\alpha_{T^-}(L^p)$, $g\in I^\alpha_{0+}(L^q)$ and $\frac{1}{p}+\frac{1}{q}-1\leq \alpha$.
\end{theorem}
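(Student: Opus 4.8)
The plan is to reduce the identity to the Hardy--Littlewood fractional integration by parts formula stated above by exploiting that the Riemann--Liouville derivatives $^{RL}D^\alpha_{0+}$ and $^{RL}D^\alpha_{T^-}$ act as left inverses of the corresponding fractional integral operators on the spaces in question. Since $g\in I^\alpha_{0+}(L^q)$ and $f\in I^\alpha_{T^-}(L^p)$, write $g = I^\alpha_{0+}\psi$ with $\psi\in L^q([0,T];\R)$ and $f = I^\alpha_{T^-}\varphi$ with $\varphi\in L^p([0,T];\R)$. Note that $I^\alpha_{0+}$ and $I^\alpha_{T^-}$ map $L^1([0,T];\R)$ into itself, so $f,g\in L^1([0,T];\R)$ and both Riemann--Liouville derivatives in the statement are well defined.

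First I would verify the two pointwise identities $^{RL}D^\alpha_{0+}g = \psi$ and $^{RL}D^\alpha_{T^-}f = \varphi$ almost everywhere on $[0,T]$. For the first, the semigroup property of the right-handed Riemann--Liouville integrals gives $I^{1-\alpha}_{0+}g = I^{1-\alpha}_{0+}I^{\alpha}_{0+}\psi = I^{1}_{0+}\psi$, a function which is absolutely continuous with $D\,I^{1}_{0+}\psi = \psi$ a.e.; hence $^{RL}D^\alpha_{0+}g = D\,I^{1-\alpha}_{0+}g = \psi$. The second identity follows in the same way from $I^{1-\alpha}_{T^-}I^{\alpha}_{T^-}\varphi = I^{1}_{T^-}\varphi$, using that $-\frac{d}{dt}\int_t^T \varphi(\tau)\,d\tau = \varphi(t)$ a.e. together with the definition $^{RL}D^\alpha_{T^-} = -D\,I^{1-\alpha}_{T^-}$.

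With these in hand, the left-hand side of the claimed formula becomes $\int_0^T f(t)\,\psi(t)\,dt = \int_0^T (I^\alpha_{T^-}\varphi)(t)\,\psi(t)\,dt$, while the right-hand side becomes $\int_0^T g(t)\,\varphi(t)\,dt = \int_0^T (I^\alpha_{0+}\psi)(t)\,\varphi(t)\,dt$. Now apply the Hardy--Littlewood fractional integration by parts formula with $\psi\in L^q([0,T];\R)$ in the role of the first function and $\varphi\in L^p([0,T];\R)$ in the role of the second: the exponent condition $\tfrac1q+\tfrac1p-1\le\alpha$ is precisely the hypothesis of the present theorem, so $\int_0^T (I^\alpha_{0+}\psi)\,\varphi\,dt = \int_0^T \psi\,(I^\alpha_{T^-}\varphi)\,dt$, which is exactly the asserted equality.

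I expect the only genuine technical point to be the justification that $^{RL}D^\alpha_{0+}$ inverts $I^\alpha_{0+}$ on $L^q$, and its left-handed analogue: this relies on the semigroup law $I^{\beta}_{0+}I^{\gamma}_{0+} = I^{\beta+\gamma}_{0+}$ valid for $L^1$ functions (a Fubini argument combined with the Beta-function identity) and on the absolute continuity and a.e.\ differentiability of $I^{1}_{0+}\psi$ with derivative $\psi$. One should also record that the constraint $\tfrac1p+\tfrac1q-1\le\alpha<1$ keeps us in the admissible range for the Hardy--Littlewood theorem (when one of the exponents equals $1$ the borderline case is covered by the cited reference \cite{LY_37}), so the final invocation is legitimate.
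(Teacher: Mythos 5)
Your argument is correct and is essentially the same reduction used in the reference the paper cites for this statement (\cite[Corollary~2, p.~46]{Samko}): write $f=I^\alpha_{T^-}\varphi$, $g=I^\alpha_{0+}\psi$, use the semigroup property and Lebesgue differentiation to get $^{RL}D^\alpha_{0+}g=\psi$ and $^{RL}D^\alpha_{T^-}f=\varphi$ a.e., and then invoke the Hardy--Littlewood formula for the fractional integrals. The only point worth recording explicitly is the one you already flag: the stated Hardy--Littlewood theorem assumes $p>1$, $q>1$, so the endpoint cases $p=1$ or $q=1$ (allowed when $\tfrac1p+\tfrac1q-1<\alpha$) must be covered by the sharper form of that result in \cite{LY_37}.
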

\begin{proof}
See \cite[Corollary 2, pp. 46]{Samko}.
\end{proof}
The right-handed \emph{Caputo fractional derivative} of $x$ on $[0,T]$ is defined by
\[
^{\!C}D^\alpha_{0+}x(t)=^{RL\!}D_{a+}^\alpha (x(t)-x(0))\quad\text{for almost}\;t\in [0,T]
\]
and the left-handed \emph{Caputo fractional derivative} of $x$ on $[0,T]$ is defined by
\[
^{\!C}D^\alpha_{T^-}x(t)=^{RL\!}D_{T^-}^\alpha (x(t)-x(T))\quad\text{for almost}\;t\in [0,T].
\]
We have a sufficient condition for the existence of fractional derivative.
\begin{theorem}
Let $f\in AC([0,T];\R)$, $\alpha\in (0,1)$, then $^{RL}D^\alpha_{0+}f$ and $^{RL}D^\alpha_{T^-}f$ exist almost everywhere. Moreover, $^{RL}D^\alpha_{0+}f, ^{RL}D^\alpha_{T^-}f\in L^r([0,T];\R)$, $1\leq r\leq \frac{1}{\alpha}$, and
\[
^{RL}D^\alpha_{0+}f(t)=\frac{1}{\Gamma(1-\alpha)}\Big[\frac{f(0)}{t^\alpha}+\int_0^t \frac{f'(\tau)}{(t-\tau)^\alpha}d\tau\Big],
\]
\[
^{RL}D^\alpha_{T^-}f(t)=\frac{1}{\Gamma(1-\alpha)}\Big[\frac{f(T)}{(T-t)^\alpha}-\int_t^T \frac{f'(\tau)}{(\tau-t)^\alpha}d\tau\Big].
\]
\end{theorem}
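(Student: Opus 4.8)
The plan is to work throughout with the absolutely continuous representation $f(t)=f(0)+\int_0^t f'(\tau)\,d\tau$, where $f'\in L^1([0,T];\R)$, and to evaluate $I_{0+}^{1-\alpha}f$ explicitly \emph{before} differentiating. Inserting this representation into the definition of the Riemann--Liouville integral and splitting off the constant $f(0)$ produces one elementary term, $\frac{f(0)}{\Gamma(1-\alpha)}\int_0^t(t-\tau)^{-\alpha}\,d\tau=\frac{f(0)}{\Gamma(2-\alpha)}t^{1-\alpha}$, plus a double integral $\frac{1}{\Gamma(1-\alpha)}\int_0^t(t-\tau)^{-\alpha}\!\int_0^\tau f'(s)\,ds\,d\tau$. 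I would interchange the order of integration in the latter; this is legitimate by Fubini's theorem since
\[
\int_0^t\!\int_0^\tau(t-\tau)^{-\alpha}|f'(s)|\,ds\,d\tau=\frac{1}{1-\alpha}\int_0^t(t-s)^{1-\alpha}|f'(s)|\,ds\le\frac{T^{1-\alpha}}{1-\alpha}\int_0^T|f'(s)|\,ds<\infty .
\]
Carrying out the inner integral then yields $(I_{0+}^{1-\alpha}f)(t)=\frac{f(0)}{\Gamma(2-\alpha)}t^{1-\alpha}+(I_{0+}^{2-\alpha}f')(t)$.

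Next I would differentiate this identity. For the power term the ordinary derivative together with the functional equation $\Gamma(2-\alpha)=(1-\alpha)\Gamma(1-\alpha)$ gives $\frac{f(0)}{\Gamma(1-\alpha)}t^{-\alpha}$. For the second term I would invoke the semigroup property $I_{0+}^{2-\alpha}=I_{0+}^{1}\circ I_{0+}^{1-\alpha}$, so that $(I_{0+}^{2-\alpha}f')(t)=\int_0^t (I_{0+}^{1-\alpha}f')(\sigma)\,d\sigma$; since $f'\in L^1$ forces $I_{0+}^{1-\alpha}f'\in L^1$, this is the indefinite Lebesgue integral of an $L^1$ function, hence absolutely continuous and a.e. differentiable with derivative $I_{0+}^{1-\alpha}f'$ by the Lebesgue differentiation theorem. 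Adding the two derivatives shows that $^{RL}D^\alpha_{0+}f=DI_{0+}^{1-\alpha}f$ exists for almost every $t$ and equals $\frac{1}{\Gamma(1-\alpha)}\big[f(0)t^{-\alpha}+\int_0^t f'(\tau)(t-\tau)^{-\alpha}\,d\tau\big]$, which is the claimed formula. The left-handed assertion is proved in exactly the same way, now starting from $f(t)=f(T)-\int_t^T f'(\tau)\,d\tau$, using the kernel $(\tau-t)^{-\alpha}$ on $[t,T]$ and the identity $DI_{T^-}^{1}g=-g$; the minus sign built into the definition of $^{RL}D^\alpha_{T^-}$ together with the orientation of the interval accounts for the sign in front of the integral term.

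For the integrability statement I would read it off the explicit formula. The function $t\mapsto t^{-\alpha}$ belongs to $L^r([0,T];\R)$ exactly when $\alpha r<1$, so the term $\frac{f(0)}{\Gamma(1-\alpha)}t^{-\alpha}$ lies in $L^r$ for every $1\le r<1/\alpha$. Writing $I_{0+}^{1-\alpha}f'$ as the convolution on $[0,T]$ of $f'\in L^1$ with the kernel $k_\alpha(t)=t^{-\alpha}/\Gamma(1-\alpha)\in L^r([0,T];\R)$, Young's convolution inequality gives $\|I_{0+}^{1-\alpha}f'\|_{L^r}\le\|k_\alpha\|_{L^r}\,\|f'\|_{L^1}<\infty$ in the same range. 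Hence $^{RL}D^\alpha_{0+}f,\ ^{RL}D^\alpha_{T^-}f\in L^r([0,T];\R)$ for $1\le r<1/\alpha$, the borderline exponent $r=1/\alpha$ being covered only when $f(0)$, resp. $f(T)$, vanishes (otherwise the $t^{-\alpha}$ term lands merely in weak $L^{1/\alpha}$).

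The only point demanding care is the passage of the derivative through the fractional integral, i.e. the almost everywhere existence of $^{RL}D^\alpha_{0+}f$; this is precisely what the decomposition $(I_{0+}^{1-\alpha}f)(t)=\frac{f(0)}{\Gamma(2-\alpha)}t^{1-\alpha}+(I_{0+}^{2-\alpha}f')(t)$ together with the semigroup identity is designed to resolve, since it reduces the question to the classical fact that an indefinite Lebesgue integral is differentiable almost everywhere. The Fubini interchange, the Gamma-function bookkeeping, and the power-function and Young estimates are all routine.
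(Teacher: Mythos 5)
Your proof is correct. The paper does not actually prove this statement---it only cites Lemma~2.2 of Samko--Kilbas--Marichev---and your argument is essentially the standard one given there: write $f(t)=f(0)+\int_0^t f'(\tau)\,d\tau$, use Fubini to obtain $(I_{0+}^{1-\alpha}f)(t)=\frac{f(0)}{\Gamma(2-\alpha)}t^{1-\alpha}+(I_{0+}^{2-\alpha}f')(t)$, and reduce the a.e.\ differentiability to the Lebesgue differentiation theorem via the semigroup identity $I_{0+}^{2-\alpha}=I_{0+}^{1}\circ I_{0+}^{1-\alpha}$. One worthwhile observation you make: the integrability range should be $1\le r<\frac{1}{\alpha}$ (strict), as in the cited reference, since $t^{-\alpha}\notin L^{1/\alpha}$ near the origin; the ``$\le$'' in the theorem as printed is a typo, and your remark that the endpoint generally fails is accurate (indeed, even when $f(0)=0$ the convolution term $I_{0+}^{1-\alpha}f'$ with $f'\in L^1$ is only guaranteed to lie in $L^r$ for $r<1/\alpha$, so the endpoint is not recovered by that hypothesis alone).
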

\begin{proof}
See \cite[Lemma 2.2, pp. 35--36]{Samko}.
\end{proof}
\begin{definition}\label{w_sol}
Let $u\in L^1([0,T];H^1_0(\Omega))$. We define the weak Riemann--Liouville fractional derivative of the order $\alpha$ of $u$, $^{RL}D^\alpha_{0+}u(t)$, as below
\[
\int_0^T \varphi(t){^{RL}D^\alpha_{0+}}u(t)dt=\int_0^T{^{RL}D^\alpha_{T^-}}\varphi(t)u(t)dt
\]
for all $\varphi\in C^\infty_c((0,T);\R)$.
\end{definition}
%\subsection{Fractional stochastic differential equations}
%
%
\section{Weak solutions to time-fractional elliptic equations}
%\subsection{Main result}
Let $\Omega$ be a bounded domain in $\R^d$ with the boundary $\partial \Omega\in C^1$, $T>0$ and $\alpha\in (0,1)$. Denote $\Omega_T=(0,T]\times\Omega$. We consider the equation of the order $\alpha$
\begin{align}\label{main_eq}
\frac{\partial^{\alpha} u(t,x)}{\partial t^\alpha}-\sum_{i,j=1}^d \partial_{x_i}(a_{ij}(t,x)\partial_{x_j} u(t,x))+\sum_{j=1}^d b_j(t,x)\partial_{x_j}u(t,x)+c(t,x)u(t,x)=f(t)
\end{align}
for $(t,x)\in \Omega_T,$ where $\frac{\partial^{\alpha} u(t,\cdot)}{\partial t^\alpha}$ is the weak Riemann--Liouville fractional derivative of the order $\alpha$ of $u$ with respect to the time variable $t$ and
\begin{itemize}
\item[(a1)] $a_{ij}, b_j, c\in L^\infty (\Omega_T;\R)$ for all $1\leq i,j\leq d$;
\item[(a2)] $a_{ij}=a_{ji}$ for all $1\leq i,j\leq d$;
\item[(a3)] there exists $\theta>0$ such that $\sum_{i,j=1}^d a_{ij}(t,x)\xi_i\xi_j\geq \theta \|\xi\|^2$ for a.e. $t\in (0,T)$, $x\in \Omega$ and for all $\xi\in \R^d$;
\item[(a4)] $f\in L^\infty([0,T];H^{-1}(\Omega))$. 
\end{itemize}
Assume that
\begin{equation}\label{boundary_eq}
u(t,x)=0\; \text{on}\;[0,T]\time\partial\times\partial \Omega.
\end{equation}
Denote 
\[
a(u,v;t):=\int_{\Omega}\sum_{1\leq i,j\leq d}a_{ij}\partial_{x_j}u\partial_{x_i}v+\sum_{1\leq j\leq d}b_j\partial_{x_j}u v+cuv
\]
for all $u,v\in H^1_0(\Omega)$ and $\bold{u}(t):=[u(t)](x)$ for all $t\in [0,T]$, $x\in \Omega$.
\begin{definition}[Weak solution]
A function $\bold{u}:[0,T]\rightarrow H^1_0(\Omega)$ is a weak solution to the problem \eqref{main_eq} with the condition \eqref{boundary_eq} if 
\begin{itemize}
\item[(i)] $\bold{u}\in L^2([0,T];H^1_0(\Omega))$ and $^{RL}D^\alpha_{0+}\bold{u}\in L^2([0,T];H^{-1}(\Omega))$;
\item[(ii)] for all $v\in H^1_0(\Omega)$
\[
\langle ^{RL}D^\alpha_{0+}\bold{u}(t),v\rangle_{H^{-1}\times H^1_0}+a(\bold{u}(t),v;t)=\langle f(t),v\rangle_{H^{-1}\times H^1_0}
\]
for a.e. $t\in (0,T)$, where $\langle \cdot,\cdot\rangle_{H^{-1}\times H^1_0}$ is the duality pairing between $H^{-1}(\Omega)$ and $H^1_0(\Omega)$.
\end{itemize}
\end{definition}
\subsection{Galerkin approximation solution}
Let $\{e_j\}_{j=1}^\infty$ be smooth functions which constitutes an orthonormal basic of $L^2(\Omega)$ and a basis of $H^1_0(\Omega)$ such that 
\begin{itemize}
\item[(i)] $-\Delta e_k=\lambda_k e_k$, $k\in \N$;
\item[(ii)] \[
\int_\Omega e_j e_j=\begin{cases}
1,\quad i=j\\
0,\quad i\ne j
\end{cases}
\text{and}\;
\int_{\Omega}De_i De_j =\begin{cases}
\lambda_i,\quad i=j\\
0,\quad i\ne j
\end{cases}
\]
\end{itemize}
For the existence of these functions, see \cite[p. 335]{Evans_98}. Fix $N\in \N$. Let $E_N=\text{span}\{e_1,\dots,e_N\}$ and 
$P_N$ be the project map from $L^2(\Omega)$ to $E_N$ defined by
$P_Nu=\sum_{i=1}^Nc^ie_i$ with $u\in L^2(\Omega)$ has the form $u=\sum_{i=1}^\infty c^i e_i$ in which $c^i=\int_{\Omega}ue_idx$, $i\in\N$. In this section, using Galerkin method, we will construct approximation solutions to the problem \eqref{main_eq}--\eqref{boundary_eq} in $E_N$.

Let the function $\bold{u}_N:[0,T]\rightarrow E_N$ having the form
\begin{equation}\label{Galerkin_sol}
\bold{u}_N(t)=\sum_{i=1}^N c^i(t)e_i,\quad t\in [0,T],
\end{equation}
here $c^i(\cdot)$, $1\leq i\leq N$, is continuous and has the Riemann--Liouville fractional derivative of the order $\alpha$ on $[0,T]$. Assume that 
\begin{equation}\label{gas}
(^{RL}D^\alpha_{0+}\bold{u}_{N},v)_{L^2}+a(\bold{u}_N(t),v;t)=\langle f(t),v\rangle_{H^{-1}\times H^1_0}
\end{equation}
for a.e. $t\in (0,T)$ and 
\begin{equation}\label{ini_cond}
\bold{u}_N(0)=0.
\end{equation}
We obtain the following result.
\begin{proposition}
For any $N\in \N$, there exists a unique solution to the problem \eqref{gas}--\eqref{ini_cond} having the form \eqref{Galerkin_sol}.
\end{proposition}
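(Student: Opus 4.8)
The plan is to reduce \eqref{gas}--\eqref{ini_cond} for the ansatz \eqref{Galerkin_sol} to a system of $N$ scalar fractional Volterra integral equations and to solve that system by a contraction-mapping argument. Testing \eqref{gas} against $v=e_k$, $k=1,\dots,N$, and using the $L^2(\Omega)$-orthonormality of $\{e_j\}_{j\ge1}$, the identity \eqref{gas} for $\mathbf u_N=\sum_{i=1}^N c^i(\cdot)e_i$ becomes equivalent to
\begin{equation}\label{scalar_system}
{}^{RL}D^\alpha_{0+}c^k(t)+\sum_{i=1}^N a(e_i,e_k;t)\,c^i(t)=\langle f(t),e_k\rangle_{H^{-1}\times H^1_0},\qquad k=1,\dots,N,
\end{equation}
for a.e.\ $t\in(0,T)$, while \eqref{ini_cond} reads $c^k(0)=0$. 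Writing $\mathbf c=(c^1,\dots,c^N)^\top$, $A(t)=\big(a(e_i,e_k;t)\big)_{k,i=1}^N$ and $F(t)=\big(\langle f(t),e_k\rangle\big)_{k=1}^N$, assumptions (a1)--(a4) and the smoothness of the $e_j$ on the bounded set $\Omega$ give $A\in L^\infty([0,T];\R^{N\times N})$ and $F\in L^\infty([0,T];\R^N)$. Since $\mathbf c(0)=0$, the Riemann--Liouville and Caputo derivatives of $\mathbf c$ coincide, so \eqref{scalar_system} together with $\mathbf c(0)=0$ is equivalent to the Volterra equation
\begin{equation}\label{volterra}
\mathbf c(t)=I^\alpha_{0+}\big[F(\cdot)-A(\cdot)\mathbf c(\cdot)\big](t),\qquad t\in[0,T].
\end{equation}

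Next I would solve \eqref{volterra} by a fixed-point argument. Define $\mathcal T\colon C([0,T];\R^N)\to C([0,T];\R^N)$ by $(\mathcal T\mathbf c)(t):=I^\alpha_{0+}[F-A\mathbf c](t)$; this is well defined because $I^\alpha_{0+}$ maps bounded functions to continuous functions vanishing at $t=0$. With $L:=\|A\|_{L^\infty}$ one has $|(\mathcal T\mathbf c_1)(t)-(\mathcal T\mathbf c_2)(t)|\le\frac{L}{\Gamma(\alpha)}\int_0^t(t-s)^{\alpha-1}|\mathbf c_1(s)-\mathbf c_2(s)|\,ds$, and iterating this estimate, using $I^\alpha_{0+}(s\mapsto s^\beta)(t)=\frac{\Gamma(\beta+1)}{\Gamma(\beta+\alpha+1)}t^{\beta+\alpha}$, yields
\begin{equation}\label{ML_bound}
\|\mathcal T^m\mathbf c_1-\mathcal T^m\mathbf c_2\|_{C([0,T];\R^N)}\le\frac{(LT^\alpha)^m}{\Gamma(m\alpha+1)}\,\|\mathbf c_1-\mathbf c_2\|_{C([0,T];\R^N)},\qquad m\in\N.
\end{equation}
Because $(LT^\alpha)^m/\Gamma(m\alpha+1)\to0$ as $m\to\infty$ (it is the general term of the entire Mittag--Leffler series $E_\alpha(LT^\alpha)$), some power $\mathcal T^m$ is a contraction, and the Banach fixed-point theorem gives a unique $\mathbf c\in C([0,T];\R^N)$ solving \eqref{volterra}. (A Bielecki-type weighted norm would do as well.)

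It then remains to turn this back into a solution of \eqref{gas}--\eqref{ini_cond} of the form \eqref{Galerkin_sol}, and to establish uniqueness in the prescribed class. From \eqref{volterra}, $\mathbf c=I^\alpha_{0+}g$ with $g:=F-A\mathbf c\in L^\infty\subset L^1$, so the semigroup law $I^{1-\alpha}_{0+}I^\alpha_{0+}=I^1_{0+}$ gives ${}^{RL}D^\alpha_{0+}\mathbf c=D\,I^{1-\alpha}_{0+}I^\alpha_{0+}g=g$ a.e.; each $c^i$ is continuous and $\mathbf c(0)=(\mathcal T\mathbf c)(0)=0$. Inserting $c^i$ into \eqref{Galerkin_sol} and running the reduction backwards shows $\mathbf u_N$ satisfies \eqref{gas} for every $v\in E_N$ and \eqref{ini_cond}. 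Conversely, if $\mathbf u_N$ of the form \eqref{Galerkin_sol} solves \eqref{gas}--\eqref{ini_cond} with every $c^i$ continuous, $c^i(0)=0$ and possessing a Riemann--Liouville derivative of order $\alpha$, then ${}^{RL}D^\alpha_{0+}c^i\in L^\infty\subset L^1$ and $I^{1-\alpha}_{0+}c^i$ vanishes at $0$ (as $c^i$ is bounded), hence $I^{1-\alpha}_{0+}c^i=I^1_{0+}\,{}^{RL}D^\alpha_{0+}c^i$; applying $I^\alpha_{0+}$, using the semigroup law and differentiating gives $c^i=I^\alpha_{0+}\,{}^{RL}D^\alpha_{0+}c^i$, so $\mathbf c$ solves \eqref{volterra} and therefore coincides with the fixed point above.

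The matrix bounds, the estimate \eqref{ML_bound} and the two directions of the reduction are routine; I expect the only delicate point to be the equivalence between the weak/Riemann--Liouville formulation \eqref{scalar_system} and the Volterra equation \eqref{volterra} — namely, that $\mathcal T$ indeed maps $C([0,T];\R^N)$ into itself, and that a continuous $c^i$ with $c^i(0)=0$ whose Riemann--Liouville derivative satisfies \eqref{scalar_system} necessarily solves \eqref{volterra}, where the semigroup identity for the Riemann--Liouville integrals and the vanishing of $I^{1-\alpha}_{0+}c^i$ at the origin are the facts that matter.
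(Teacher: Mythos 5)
Your proposal is correct and follows essentially the same route as the paper: reduce \eqref{gas}--\eqref{ini_cond} to the Volterra integral equation for the coefficient vector (using that the Riemann--Liouville and Caputo derivatives coincide under the zero initial condition) and solve it by a fixed-point argument in $C([0,T];\R^N)$. The only cosmetic difference is that you obtain contractivity by iterating the operator and invoking the Mittag--Leffler bound $(LT^\alpha)^m/\Gamma(m\alpha+1)\to 0$, whereas the paper uses the Bielecki weighted norm $\|\varphi\|_\gamma=\max_t\|\varphi(t)\|e^{-\gamma t}$ that you mention as an alternative; your extra care in verifying both directions of the equivalence with the Volterra equation is a welcome addition to a step the paper merely asserts.
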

\begin{proof}
Consider the function $\bold{u}_N:[0,T]\rightarrow E_N$ having the form
\[
\bold{u}_N(t)=\sum_{i=1}^N c^i(t)e_i,\quad t\in [0,T],
\]
where $c^i(\cdot)$, $1\leq i\leq N$, is continuous and has the Riemann--Liouville fractional derivative of the order $\alpha$ on $[0,T]$. To $\bold{u}_N(\cdot)$ is a solution to \eqref{gas}--\eqref{ini_cond} then $c^i(\cdot)$, $1\leq i\leq N$, have to satisfy the following condition
\begin{align}
&(\sum_{i=1}^N {^{RL}D^\alpha_{0+}c^{i}(t)}e_i,e_j)_{L^2}+a(\sum_{i=1}^N c^i(t)e_i,e_j;t)\label{e1}\\
\notag&\hspace{1cm}={^{RL}D^\alpha_{0+}}c^{j}(t)+\sum_{i=1}^N c^i(t)a(e_i,e_j;t)=\langle f(t),e_j\rangle_{H^{-1}\times H^1_0}=f^j(t)
\end{align}
for $1\leq j\leq N$ and almost every $t\in (0,T]$. Moreover,
\begin{equation}\label{e2}
c^j(0)=0,\quad 1\leq j\leq N.
\end{equation}
Put $\overrightarrow{c}(t)=(c^1(t),\dots,c^N(t))^{\rm T}$, $A(t)=(a(e_i,e_j;t))_{1\leq i,j\leq N}$, and $$\overrightarrow{f}(t)=(f^1(t),\dots,f^N(t))^{\rm T}.$$ Then the system \eqref{e1}--\eqref{e2} is rewritten in the form
\begin{align}
\label{e3}^{RL}D^\alpha_{0+}\overrightarrow{c}(t)+A(t)\overrightarrow{c}(t)={^{C}D^\alpha_{0+}}\overrightarrow{c}(t)+A(t)\overrightarrow{c}(t)&=\overrightarrow{f}(t),\quad t\in (0,T],\\
\label{e4}\overrightarrow{c}(0)&=0.
\end{align}
Hence, the system \eqref{e3}--\eqref{e4} has a solution on $[0,T]$ if and only if the following integral equation has a continuous solution
\begin{equation}\label{equi}
\overrightarrow{c}(t)=-\frac{1}{\Gamma(\alpha)}\int_0^t (t-\tau)^{\alpha-1}A(\tau)\overrightarrow{c}(\tau)d\tau+\frac{1}{\Gamma(\alpha)}\int_0^t (t-\tau)^{\alpha-1}\overrightarrow{f}(\tau)d\tau,\;t\in [0,T].
\end{equation}
On the space $C([0,T];\R^N)$, we establish an operator as
\[
\mathcal{T}_0\varphi(t)=-\frac{1}{\Gamma(\alpha)}\int_0^t (t-\tau)^{\alpha-1}A(\tau)\varphi(\tau)d\tau+\frac{1}{\Gamma(\alpha)}\int_0^t (t-\tau)^{\alpha-1}\overrightarrow{f}(\tau)d\tau,\;t\in [0,T].
\]
For any $\gamma>0$, define a norm $\|\cdot\|_\gamma$ on $C([0,T];\R^N)$ by
\[
\|\varphi\|_\gamma:=\max_{t\in [0,T]}\frac{\|\varphi(t)\|}{\exp{(\gamma t)}}.
\]
It is obvious that $(C([0,T];\R^N), \|\cdot\|_\gamma)$ is a Banach space. On the other hand, for any $\varphi,\tilde{\varphi}\in C([0,T];\R^N)$ and $t\in [0,T]$, we have
\begin{align*}
\frac{\|\mathcal{T}_0\varphi(t)-\mathcal{T}_0\tilde{\varphi}(t)\|}{\exp{(\gamma t)}}&\leq \frac{\text{ess sup}_{t\in [0,T]}\|A(t)\|}{\Gamma(\alpha)}\int_0^t (t-\tau)^{\alpha-1}\exp{(-\gamma(t-\tau))}\frac{\|\varphi(\tau)-\tilde{\varphi}(\tau)\|}{\exp{(\gamma \tau)}}d\tau\\
&\leq \frac{\text{ess sup}_{t\in [0,T]}\|A(t)\|}{\gamma^\alpha\Gamma(\alpha)}\int_0^{\gamma t} u^{\alpha-1}\exp{(-u)}du\|\varphi-\tilde{\varphi}\|_\gamma\\
&\leq\frac{\text{ess sup}_{t\in [0,T]}\|A(t)\|}{\gamma^\alpha}\|\varphi-\tilde{\varphi}\|_\gamma.
\end{align*}
Hence, for $\gamma>0$ large enough, the operator $\mathcal{T}_0$ is contractive in $(C([0,T];\R^N), \|\cdot\|_\gamma)$ and has a unique fixed point which is also the solution to the system \eqref{equi}. The proof is complete.
\end{proof}
We now give some estimate of the Galerkin approximation solution.
\begin{proposition}
There exists a positive constant $C$, depending on $\Omega$, $T$ and the coefficients of the equation \eqref{main_eq}, such that for all $N\in \N$ it holds that
\begin{align*}
&\|\bold{u}_N\|^2_{L^2([0,T];L^2(\Omega))}+\|\bold{u}_N\|^2_{L^2([0,T];H^1_0(\Omega))}+\|^{RL}D^\alpha_{0+}\bold{u}_N\|^2_{L^2([0,T];H^{-1}(\Omega))}\\
&\hspace{2cm}\leq C\|f\|^2_{L^\infty([0,T];H^{-1}(\Omega)}.
\end{align*}
\end{proposition}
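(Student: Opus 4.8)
The plan is to obtain the a priori bound by the standard energy method adapted to the fractional setting. First I would test the Galerkin identity \eqref{gas} with $v=\bold{u}_N(t)$, which gives
\[
({^{RL}D^\alpha_{0+}}\bold{u}_N(t),\bold{u}_N(t))_{L^2}+a(\bold{u}_N(t),\bold{u}_N(t);t)=\langle f(t),\bold{u}_N(t)\rangle_{H^{-1}\times H^1_0}
\]
for a.e. $t\in(0,T)$. The key fractional tool is the basic inequality for the fractional derivative of a convex Lyapunov function applied to $V(y)=\tfrac12\|y\|_{L^2}^2$, which yields
\[
({^{RL}D^\alpha_{0+}}\bold{u}_N(t),\bold{u}_N(t))_{L^2}\ge \tfrac12\,{^{RL}D^\alpha_{0+}}\|\bold{u}_N(t)\|_{L^2}^2
\]
(using $\bold{u}_N(0)=0$, so the Riemann--Liouville and Caputo derivatives agree here). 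For the bilinear form, coercivity comes from (a3): $a(\bold{u}_N,\bold{u}_N;t)\ge \theta\|D\bold{u}_N\|_{L^2}^2-$ lower order terms, and the first-order and zeroth-order terms in $b_j,c$ are absorbed using $\|b_j\|_{L^\infty},\|c\|_{L^\infty}$, the Cauchy--Schwarz and Young inequalities, and (if needed) the Poincar\'e inequality on the bounded domain $\Omega$; the right-hand side is bounded by $\langle f,\bold{u}_N\rangle\le \|f(t)\|_{H^{-1}}\|\bold{u}_N(t)\|_{H^1_0}\le \tfrac{\theta}{4}\|\bold{u}_N(t)\|_{H^1_0}^2+C\|f(t)\|_{H^{-1}}^2$.

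Combining these, I would arrive at a differential inequality of the form
\[
{^{RL}D^\alpha_{0+}}\|\bold{u}_N(t)\|_{L^2}^2+\theta\|\bold{u}_N(t)\|_{H^1_0}^2\le C_1\|\bold{u}_N(t)\|_{L^2}^2+C_2\|f(t)\|_{H^{-1}}^2.
\]
Dropping the nonnegative gradient term and applying a fractional Gr\"onwall inequality (or comparison with the scalar Mittag--Leffler-type equation) gives a uniform-in-$N$ bound on $\sup_{t\in[0,T]}\|\bold{u}_N(t)\|_{L^2}^2$, hence on $\|\bold{u}_N\|_{L^2([0,T];L^2(\Omega))}^2$. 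Feeding this bound back and integrating the inequality over $[0,T]$ (using that $I_{0+}^\alpha$ applied to an $L^1$ function is controlled, or integrating the Riemann--Liouville derivative directly) controls $\int_0^T \|\bold{u}_N(t)\|_{H^1_0}^2\,dt$, giving the second term.

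For the third term, I would bound $\|{^{RL}D^\alpha_{0+}}\bold{u}_N\|_{L^2([0,T];H^{-1}(\Omega))}$ directly from the equation: for any $v\in H^1_0(\Omega)$ with $\|v\|_{H^1_0}\le 1$, write $v=v_N+v_N^\perp$ with $v_N=P_Nv$; since ${^{RL}D^\alpha_{0+}}\bold{u}_N(t)\in E_N$, the pairing against $v$ equals the pairing against $v_N$, so
\[
\langle {^{RL}D^\alpha_{0+}}\bold{u}_N(t),v\rangle=\langle f(t),v_N\rangle-a(\bold{u}_N(t),v_N;t)\le \big(\|f(t)\|_{H^{-1}}+C\|\bold{u}_N(t)\|_{H^1_0}\big)\|v_N\|_{H^1_0},
\]
and $\|v_N\|_{H^1_0}\le \|v\|_{H^1_0}\le 1$ because the $e_j$ are orthogonal in the $H^1_0$ inner product. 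Squaring, integrating in $t$, and using the already-established bounds on the first two terms and on $\|f\|_{L^\infty([0,T];H^{-1})}$ finishes the estimate. The main obstacle I anticipate is justifying the fractional chain-rule inequality $({^{RL}D^\alpha_{0+}}\bold{u}_N,\bold{u}_N)_{L^2}\ge\tfrac12{^{RL}D^\alpha_{0+}}\|\bold{u}_N\|_{L^2}^2$ at the level of regularity available for the Galerkin coefficients $c^i(\cdot)$ (they are continuous with an $\alpha$-order RL derivative, not necessarily absolutely continuous), together with making the fractional Gr\"onwall step rigorous; these are precisely the ``basic inequality for the fractional derivative of convex Lyapunov candidate functions'' advertised in the introduction, so I would invoke that lemma once it is in place.
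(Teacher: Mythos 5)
Your proposal is correct and follows essentially the same route as the paper: testing \eqref{gas} with $v=\bold{u}_N(t)$, applying the fractional Lyapunov inequality (the paper cites \cite[Theorem 2]{Tuan_18} at the level of the scalar coefficients $c^i$), using G\r{a}rding's inequality for $a(\cdot,\cdot;t)$, a fractional comparison/Mittag--Leffler argument for the $L^2$ bound, integration of $\frac{d}{dt}I^{1-\alpha}_{0+}\|\bold{u}_N\|^2_{L^2}$ over $[0,T]$ for the $H^1_0$ bound, and the same projection--duality argument for the $H^{-1}$ bound on the fractional derivative. The technical concerns you flag (regularity needed for the convexity inequality and the rigorous Gr\"onwall step) are exactly the points the paper handles by citation to \cite{Tuan_18} and \cite[Lemma 3.1]{Tuan_17}.
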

\begin{proof}
First, using \cite[Theorem 2]{Tuan_18}, we have
\begin{align}
\notag \langle^{RL}D^\alpha_{0+}\bold{u}_N(t),\bold{u}_N(t)\rangle_{H^{-1}\times H^1_0}&=\langle^{C}D^\alpha_{0+}\bold{u}_N(t),\bold{u}_N(t)\rangle_{H^{-1}\times H^1_0}\\
\notag&=\Big(\sum_{i=1}^N{^{C}D^\alpha_{0+}}c^i(t)e_i,\sum_{i=1}^N c^i(t)e_i\Big)_{L^2}\\
\notag &=\sum_{i=1}^N {c^i(t)^{C}D^\alpha_{0+}}c^i(t)\\
\notag&\geq \frac{1}{2}\sum_{i=1}^N {^{C}D^\alpha_{0+}}(c^i(t))^2\\
\notag&=\frac{1}{2}{^{C}D^\alpha_{0+}}\sum_{i=1}^N (c^i(t))^2\\
\notag&=\frac{1}{2}{^{C}D^\alpha_{0+}}\|\bold{u}_N(t)\|^2_{L^2(\Omega)}.
\end{align}
On the other hand, by \cite[Theorem 3, p. 300]{Evans_98}, there exist $\beta>0$ and $\nu\geq 0$ such that
\begin{equation*}
\beta \|\bold{u}_N(t)\|^2_{H^1_0}\leq a(\bold{u}_N(t),\bold{u}_N(t);t)+\nu \|\bold{u}_N(t)\|^2_{L^2}\; \text{for almost every}\; t\in (0,T).
\end{equation*}
Moreover, from the assumption of $f$ and the Cauchy inequality
\begin{align}
\notag\langle f(t),\bold{u}_N(t)\rangle_{H^{-1}\times H^1_0}&\leq \|f(t)\|_{H^{-1}(\Omega)}\|\bold{u}_N(t)\|_{H^1_0(\Omega)}\\
\notag&\leq \frac{1}{4\beta}\|f(t)\|^2_{H^{-1}(\Omega)}+\beta \|\bold{u}_N(t)\|^2_{H^1_0(\Omega)}.
\end{align}
Thus, for almost every $t\in (0,T)$,
\begin{equation}
\notag ^{C}D^\alpha_{0+}\|\bold{u}_N(t)\|^2_{L^2(\Omega)}\leq 2\nu \|\bold{u}_N(t)\|^2_{L^2(\Omega)}+\frac{1}{2\beta}\|f(t)\|^2_{H^{-1}(\Omega)}.
\end{equation}
Put $v(t):=\|\bold{u}_N(t)\|^2_{L^2}$, $h(t):=\frac{1}{2\beta}\|f(t)\|^2_{H^{-1}}$ and use the comparison principle for solutions to fractional differential equation and the variation of constants formula for solutions to the equations (see \cite[Lemma 3.1]{Tuan_17}), we obtain the estimate
\begin{equation}
\notag v(t)\leq\int_0^t (t-\tau)^{\alpha-1}E_{\alpha,\alpha}(2\nu (t-\tau)^\alpha)h(\tau)d\tau \;\text{for a.a.}\; t\in [0,T].
\end{equation} 
This implies that
\begin{equation}\label{e5}
\|\bold{u}_N(t)\|^2_{L^2(\Omega)}\leq \frac{T^\alpha E_{\alpha,\alpha+1}(2\nu T^\alpha)\|f\|^2_{L^\infty(0,T;H^{-1}(\Omega))}}{2\beta}\;\text{for a.a.}\; t\in [0,T].
\end{equation}
Next, by the similar arguments as above, we see that
\begin{align}
\notag ^{C}D^\alpha_{0+}\|\bold{u}_N(t)\|^2_{L^2(\Omega)}+\beta \|\bold{u}_N(t)\|^2_{H^1_0(\Omega)}&\leq \frac{1}{\beta}\|f(t)\|^2_{H^{-1}(\Omega)}+2\nu \|\bold{u}_N(t)\|^2_{L^2(\Omega)}.
\end{align}
Furthermore,
\begin{align*}
^{C}D^\alpha_{0+}\|\bold{u}_N(t)\|^2_{L^2(\Omega)}={^{RL}D^\alpha_{0+}}\|\bold{u}_N(t)\|^2_{L^2(\Omega)}=\frac{d}{dt}I^{1-\alpha}_{0+}(\|\bold{u}_N(t)\|^2_{L^2(\Omega)}),
\end{align*}
\begin{align}
\notag I^{1-\alpha}_{0+}(\|\bold{u}_N(t)\|^2_{L^2(\Omega)})|_0^T&+\beta\|\bold{u}_N\|^2_{L^2(0,T;H^1_0(\Omega))}\\
\notag&\leq \frac{T\|f\|^2_{L^\infty(0,T;H^{-1}(\Omega))}}{\beta}+2\nu \int_0^T \|\bold{u}_N(t)\|^2_{L^2(\Omega)}dt,
\end{align}
Thus,
\[
\beta\|\bold{u}_N\|^2_{L^2([0,T];H^1_0(\Omega))}\leq \frac{T\|f\|^2_{L^\infty([0,T];H^{-1}(\Omega))}}{\beta}+2\nu \int_0^T \|\bold{u}_N(t)\|^2_{L^2(\Omega)}dt.
\]
This combines with \eqref{e5} implies that there is a constant $C_1>0$ such that
\begin{equation}\label{e6}
\|\bold{u}_N\|^2_{L^2([0,T];H^1_0(\Omega))}\leq C_1 \|f\|^2_{L^\infty ([0,T];H^{-1}(\Omega))}.
\end{equation}
Finally, fix any $v\in H^1_0(\Omega)$ with $\|v\|_{H^1_0}\leq 1$, and write $v=v_0+v_1$, where $v_1\in \text{span}\{e_1,\dots,e_N\}$ and $(e_i,v_0)=0$, $1\leq i\leq N$. Using the estimate concerning the bilinear operator $a(\cdot,\cdot)$ and the facts that $$(^{RL}D^\alpha_{0+}u_N(t),v_1)_{L^2}=\langle ^{RL}D^\alpha_{0+}u_N(t),v\rangle_{H^{-1}\times H^1_0},$$ $(^{RL}D^\alpha_{0+}u_N(t),v_1)_{L^2}+a(u_N(t),v_1;t)=\langle f(t),v_1\rangle_{H^{-1}\times H^1_0}$ and $\|v_1\|_{H^1_0(\Omega)}\leq \|v\|_{H^1_0(\Omega)}\leq 1$, we obtain
\begin{align}
\notag|\langle ^{RL}D^\alpha_{0+}\bold{u}_N(t), v\rangle_{H^{-1}\times H^1_0}|&\leq |a(\bold{u}_N(t),v_1;t)|+|\langle f(t),v_1\rangle_{H^{-1}\times H^1_0}|\\
\notag & \leq C_2\|\bold{u}_N(t)\|_{H^1_0(\Omega)}\|v\|_{H^1_0(\Omega)}+\|f(t)\|_{H^{-1}(\Omega)}\|v\|_{H^1_0(\Omega)}.
\end{align}
Hence,
\begin{equation}\label{e7}
\notag \|^{RL}D^\alpha_{0+}\bold{u}_N(t)\|_{H^{-1}(\Omega)}\leq C_2\|\bold{u}_N(t)\|_{H^1_0(\Omega)}+\|f(t)\|_{H^{-1}(\Omega)}
\end{equation}
which together with \eqref{e7} and the estimate \eqref{e6} implies
\begin{equation}\label{e8}
\|^{RL}D^\alpha_{0+}\bold{u}_N\|^2_{L^2([0,T];H^{-1}(\Omega))}\leq C_3\|f\|^2_{L^\infty([0,T];H^{-1}(\Omega))}.
\end{equation}
From \eqref{e5}, \eqref{e6} and \eqref{e8}, the proof is complete.
\end{proof}
\subsection{Existence and uniqueness of weak solutions}
We now in a position to state the main result of the paper.
\begin{theorem}
Consider the problem \eqref{main_eq}--\eqref{boundary_eq}. Suppose that assumptions (a1)--(a4) hold. Then, this problem has a unique weak solution.
\end{theorem}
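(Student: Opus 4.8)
The strategy is to turn the a priori bound of the preceding Proposition into a weak solution via a weak-compactness argument, and then to obtain uniqueness by an energy estimate which is legitimized by a Yoshida-type regularization together with the convexity inequality for fractional derivatives.

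\textbf{Step 1 (extracting limits and identifying the fractional derivative).} Since $L^2([0,T];H^1_0(\Omega))$ and $L^2([0,T];H^{-1}(\Omega))$ are reflexive, the uniform bound lets me extract a subsequence (not relabeled) with $\bold{u}_N\rightharpoonup\bold{u}$ in the first space and ${}^{RL}D^\alpha_{0+}\bold{u}_N\rightharpoonup\chi$ in the second. I would then check that $\chi={}^{RL}D^\alpha_{0+}\bold{u}$ in the sense of Definition \ref{w_sol}. Fix $v\in H^1_0(\Omega)$ and $\varphi\in C^\infty_c((0,T);\R)$. Because $\bold{u}_N(t)\in E_N$, the quantity $\langle {}^{RL}D^\alpha_{0+}\bold{u}_N(t),v\rangle$ equals the scalar $\alpha$-th Riemann--Liouville derivative of $t\mapsto(\bold{u}_N(t),v)_{L^2}$, which by $\bold{u}_N(0)=0$ and \eqref{e3} has an $L^2$ derivative of order $\alpha$; hence Theorem \ref{integration_by_parts} (with $p=q=2$) gives
\[
\int_0^T\varphi(t)\,\langle {}^{RL}D^\alpha_{0+}\bold{u}_N(t),v\rangle\,dt=\int_0^T {}^{RL}D^\alpha_{T^-}\varphi(t)\,(\bold{u}_N(t),v)_{L^2}\,dt .
\]
Letting $N\to\infty$ — the left side tested against $\varphi\,v\in L^2([0,T];H^1_0(\Omega))$, the right side against ${}^{RL}D^\alpha_{T^-}\varphi(\cdot)\,v\in L^2([0,T];H^{-1}(\Omega))$ — yields $\int_0^T\varphi\,\langle\chi,v\rangle=\int_0^T {}^{RL}D^\alpha_{T^-}\varphi\,\langle\bold{u},v\rangle$ for all admissible $\varphi,v$, i.e.\ $\chi={}^{RL}D^\alpha_{0+}\bold{u}$; in particular $\bold{u}$ satisfies (i).

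\textbf{Step 2 (passing to the limit in the Galerkin identity).} Fix $m\in\N$, $v\in E_m$, $\varphi\in C^\infty_c((0,T);\R)$; for $N\geq m$ I multiply \eqref{gas} by $\varphi$ and integrate over $(0,T)$. The pairing term tends to $\int_0^T\varphi\,\langle {}^{RL}D^\alpha_{0+}\bold{u},v\rangle$ by Step 1; and, since by (a1)--(a2) the map $w\mapsto\int_0^T\varphi(t)\,a(w(t),v;t)\,dt$ is a bounded linear functional on $L^2([0,T];H^1_0(\Omega))$, the bilinear term tends to $\int_0^T\varphi(t)\,a(\bold{u}(t),v;t)\,dt$ — note that linearity in the first slot is exactly what makes weak convergence of $\bold{u}_N$ enough, so no Aubin--Lions-type time compactness is required. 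Thus
\[
\int_0^T\varphi(t)\,\bigl[\langle {}^{RL}D^\alpha_{0+}\bold{u}(t),v\rangle+a(\bold{u}(t),v;t)-\langle f(t),v\rangle\bigr]\,dt=0
\]
for all $\varphi$ and $v\in\bigcup_m E_m$. By the fundamental lemma of the calculus of variations the bracket vanishes for a.e.\ $t$ for each such $v$; running over a countable dense subset of $\bigcup_m E_m$ and using that the bracket is, for a.e.\ $t$, continuous (and linear) in $v\in H^1_0(\Omega)$ by (a1), I conclude (ii). Hence $\bold{u}$ is a weak solution.

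\textbf{Step 3 (uniqueness).} Let $\bold{u}_1,\bold{u}_2$ be weak solutions and $w:=\bold{u}_1-\bold{u}_2$, so $w\in L^2([0,T];H^1_0(\Omega))$, ${}^{RL}D^\alpha_{0+}w\in L^2([0,T];H^{-1}(\Omega))$, $\langle {}^{RL}D^\alpha_{0+}w(t),v\rangle+a(w(t),v;t)=0$ for a.e.\ $t$ and all $v$, and moreover $I^{1-\alpha}_{0+}w\in C([0,T];H^{-1}(\Omega))$ with value $0$ at $t=0$ (the zero initial condition; it is meaningful precisely because ${}^{RL}D^\alpha_{0+}w\in L^2([0,T];H^{-1}(\Omega))$, and is inherited from $\bold{u}_N(0)=0$). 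Testing with $v=w(t)$ gives $\langle {}^{RL}D^\alpha_{0+}w(t),w(t)\rangle+a(w(t),w(t);t)=0$. If $w$ had enough time regularity, the convexity inequality \cite[Theorem 2]{Tuan_18} would give $\langle {}^{RL}D^\alpha_{0+}w(t),w(t)\rangle\geq\frac{1}{2}\,{}^{RL}D^\alpha_{0+}\|w(t)\|^2_{L^2(\Omega)}$; it does not, so I would regularize by the Yoshida-type approximation $w_\lambda:=(I-\lambda\Delta)^{-1}w$ ($\lambda>0$), which satisfies $w_\lambda\to w$ in $L^2([0,T];H^1_0(\Omega))$ and, since $(I-\lambda\Delta)^{-1}$ commutes with ${}^{RL}D^\alpha_{0+}$ and maps $H^{-1}(\Omega)$ into $H^1_0(\Omega)\subset L^2(\Omega)$, also ${}^{RL}D^\alpha_{0+}w_\lambda\in L^2([0,T];L^2(\Omega))$ — enough regularity to apply the convexity inequality to $w_\lambda$. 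Applying $(I-\lambda\Delta)^{-1}$ to the equation, pairing with $w_\lambda$, using self-adjointness of $(I-\lambda\Delta)^{-1}$ to move the resolvents onto the test element in the elliptic term, and combining with the coercivity estimate \cite[Theorem 3, p.~300]{Evans_98} $a(w,w;t)\geq\beta\|w\|^2_{H^1_0}-\nu\|w\|^2_{L^2}$, one obtains — after letting $\lambda\to0$ (using $\|w_\lambda\|^2_{L^2}\to\|w\|^2_{L^2}$ in $L^1([0,T])$, continuity of $I^{1-\alpha}_{0+}$ on $L^1$, and stability of distributional inequalities under limits) — that ${}^{RL}D^\alpha_{0+}\|w(t)\|^2_{L^2}\leq 2\nu\|w(t)\|^2_{L^2}$ distributionally on $(0,T)$. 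The fractional comparison/variation-of-constants principle \cite[Lemma 3.1]{Tuan_17}, with the zero initial value, then forces $\|w(t)\|_{L^2}=0$ a.e., i.e.\ $\bold{u}_1=\bold{u}_2$.

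\textbf{Expected main obstacle.} The hard part will be Step 3: the chain-type inequality $\langle {}^{RL}D^\alpha_{0+}w,w\rangle\geq\frac{1}{2}\,{}^{RL}D^\alpha_{0+}\|w\|^2_{L^2}$ is available only for time-regular (essentially absolutely continuous, or finite-dimensional-valued) functions, while a weak solution has no time regularity beyond ${}^{RL}D^\alpha_{0+}\bold{u}\in L^2([0,T];H^{-1}(\Omega))$ so the usual Sobolev embeddings fail; the Yoshida regularization is exactly what bridges this gap, and the delicate bookkeeping is to verify that it commutes with ${}^{RL}D^\alpha_{0+}$ and that every $\lambda\to0$ limit survives inside $I^{1-\alpha}_{0+}$. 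A secondary, more routine, point is the identification in Step 1 of the weak Riemann--Liouville derivative of the Galerkin limit via fractional integration by parts.
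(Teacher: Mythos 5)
Your Steps 1 and 2 reproduce the paper's existence argument essentially verbatim (Banach--Alaoglu, identification of the weak Riemann--Liouville derivative through the integration-by-parts formula of Theorem \ref{integration_by_parts}, passage to the limit in the Galerkin identity over $\bigcup_M E_M$ and density), and they are fine. The genuine gap is in Step 3. The regularization $w_\lambda=(I-\lambda\Delta)^{-1}w$ is a \emph{spatial} smoothing: it upgrades $w(t)$ and ${}^{RL}D^\alpha_{0+}w(t)$ from $H^{-1}(\Omega)$-valued to $L^2(\Omega)$-valued, but it does not improve the regularity of $t\mapsto w_\lambda(t)$ in time at all. The convexity inequality $({}^{RL}D^\alpha_{0+}w_\lambda(t),w_\lambda(t))_{L^2}\geq\frac12\,{}^{RL}D^\alpha_{0+}\|w_\lambda(t)\|^2_{L^2}$ that you invoke is proved in \cite[Theorem 2]{Tuan_18} only for absolutely continuous (essentially finite-dimensional) functions; for a function that is merely in $L^2([0,T];L^2(\Omega))$ with $\frac{d}{dt}(k\ast w_\lambda)\in L^2([0,T];L^2(\Omega))$ the inequality is in fact true, but proving it --- and even making sense of ${}^{RL}D^\alpha_{0+}\|w_\lambda(\cdot)\|^2_{L^2}$, i.e. showing $k\ast\|w_\lambda(\cdot)\|^2_{L^2}\in W^1_1([0,T];\R)$ --- is precisely the hard analytical point, and the known proof goes through a \emph{temporal} approximation of the kernel. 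So your regularization relocates the difficulty rather than resolving it: the clause ``enough regularity to apply the convexity inequality to $w_\lambda$'' is exactly the unproved step. The paper handles this point differently: it replaces $k(t)=t^{-\alpha}/\Gamma(1-\alpha)$ by the Lipschitz kernels $k_n(t)=nE_\alpha(-nt^\alpha)$ (the Yosida approximation of $B=\frac{d}{dt}(k\ast\cdot)$), for which the inequality $\frac12\frac{d}{dt}(k_n\ast\|u(\cdot)\|^2_{L^2})\leq(\frac{d}{dt}(k_n\ast u),u)_{L^2}$ holds for arbitrary $u\in L^2([0,T];L^2(\Omega))$ with no time regularity, shows the error term $h_n=\langle\frac{d}{dt}((k_n-k)\ast u),u\rangle\to0$ in $L^1$ via Zacher's resolvent estimates, and only then convolves with $l(t)=t^{\alpha-1}/\Gamma(\alpha)$ and applies the singular Gronwall lemma. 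Some version of this kernel approximation is unavoidable in your route as well.

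A secondary, repairable defect in Step 3: $(I-\lambda\Delta)^{-1}$ does not commute with the operator induced by $a(\cdot,\cdot;t)$, since the coefficients are only $L^\infty$ and time-dependent. After applying the resolvent and moving it by self-adjointness, the elliptic term becomes $a(w(t),(I-\lambda\Delta)^{-2}w(t);t)$, not $a(w_\lambda(t),w_\lambda(t);t)$, so the coercivity estimate of \cite[Theorem 3, p.~300]{Evans_98} cannot be applied before the limit $\lambda\to0$; you would need to estimate the commutator error separately. This is not addressed in the proposal.
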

\begin{proof}
First, we prove the system \eqref{main_eq}--\eqref{boundary_eq} has at least one weak solution. From \eqref{e6} and \eqref{e8}, by the Banach--Aloaglu theorem, there exist a sequence $\{n_k\}_{k=1}^\infty$ such that
\begin{align}
\bold{u}_{n_k}&\rightharpoonup u\;\text{in}\; L^2([0,T];H^1_0(\Omega)),\label{e9}\\
^{RL}D^\alpha_{0+}\bold{u}_{n_k}&\rightharpoonup v\;\text{in}\;L^2([0,T];H^{-1}(\Omega)).\label{e10}
\end{align}
Let $\varphi\in C^\infty_c((0,T);\R)$ and $\psi\in H^1_0(\Omega)$ be arbitrary. Then,
\begin{align*}
\notag\int_0^T \langle v(t),\phi(t)\psi\rangle_{H^{-1}\times H^1_0}dt&=\lim_{k\to\infty}\int_0^T \langle ^{RL}D^\alpha_{0+}\bold{u}_{n_k}(t),\phi(t)\psi\rangle_{H^{-1}\times H^1_0}dt\\
\notag&=\lim_{k\to\infty}\int_0^T \phi(t)\langle ^{RL}D^\alpha_{0+}\bold{u}_{n_k}(t),\psi\rangle_{H^{-1}\times H^1_0}dt\\
\notag&=\lim_{k\to\infty}\int_0^T {^{RL}D^\alpha_{T-}}\phi(t)\langle \bold{u}_{n_k}(t),\psi\rangle_{H^{-1}\times H^1_0}dt\\
\notag&=\int_0^T {^{RL}D^\alpha_{T-}}\phi(t)\langle \bold{u}(t),\psi\rangle_{H^{-1}\times H^1_0}dt\\
\notag&=\int_0^T\langle {^{RL}D^\alpha_{0+}}\bold{u}(t),\varphi(t)\psi\rangle_{H^{-1}\times H^1_0}dt\\
&=\int_0^T\varphi(t)\langle {^{RL}D^\alpha_{0+}}\bold{u}(t),\psi\rangle_{H^{-1}\times H^1_0}dt,
\end{align*}
which implies
\begin{equation}\label{e_11}
v(t)={^{RL}D^\alpha_{0+}}\bold{u}(t).
\end{equation}
Fix $N,M\in \N$ and $N>M$. For any $\varphi\in C^\infty_0([0,T];\R)$ and $w\in E_M$, we see that
\begin{align*}
\notag\int_0^T \langle {^{RL}D^\alpha_{0+}\bold{u}_N(t)},\varphi(t)w\rangle_{H^{-1}\times H^1_0}dt&=\int_0^T \varphi(t)\langle {^{RL}D^\alpha_{0+}\bold{u}_N(t)},w\rangle_{H^{-1}\times H^1_0}\\
&\rightarrow \int_0^T \varphi(t)\langle {^{RL}D^\alpha_{0+}\bold{u}(t)},w\rangle_{H^{-1}\times H^1_0},
\end{align*}
\begin{align*}
\notag\int_0^Ta(\bold{u}_N(t),\varphi(t)w;t)dt&=\int_0^T\varphi(t)a(\bold{u}_N(t),w;t)dt\\
&\rightarrow\int_0^T\varphi(t)a(\bold{u}(t),w;t)dt,
\end{align*}
and
\begin{equation*}
\int_0^T \langle f(t),\varphi(t)w\rangle_{H^{-1}\times H^1_0}dt=\int_0^T \varphi(t)\langle f(t),w\rangle_{H^{-1}\times H^1_0}dt.
\end{equation*}
Thus,
\begin{equation}\label{e13}
\langle {^{RL}D^\alpha_{0+}\bold{u}(t)},w\rangle_{H^{-1}\times H^1_0}+a(\bold{u}(t),w;t)=\langle f(t),w\rangle_{H^{-1}\times H^1_0}
\end{equation}
for all $w\in E_M$. Moreover, $\bigcup_{M\in \N}E_M$ is dense in $H^1_0(\Omega)$ which implies that \eqref{e13} is also true for all $w\in H^1_0(\Omega)$ and thus the problem \eqref{main_eq}--\eqref{boundary_eq} has a weak solution.

Next, we give a proof for the uniqueness of the weak solution to \eqref{main_eq}--\eqref{boundary_eq}. Suppose that $u_1, u_2$ are two weak solution to this system. Define $u:=u_1-u_2$. Then, $u$ satisfies
\begin{equation*}
\langle (k\ast u)'(t),v\rangle_{H^{-1}\times H^1_0}+a(u(t),v;t)=0,\;v\in H^1_0(\Omega),\;a.a.\;t\in (0,T).
\end{equation*}
Let $v=u(t)$, then for almost every $t\in (0,T)$
\begin{align}
\label{eq_ex_1}\langle^{RL}D^\alpha_{0+}u(t),u(t)\rangle_{H^{-1}\times H^1_0}+a(u(t),u(t);t)&=\langle (k\ast u)'(t),u(t)\rangle_{H^{-1}\times H^1_0}\\
\notag&\hspace{-1cm}+a(u(t),u(t);t)=0,
\end{align}
where $k(t)=\frac{1}{\Gamma(1-\alpha)t^\alpha}$ for $t>0$. Motivated by Rico Zacher \cite[pp. 291--292]{Zacher_08}, we will approximate the operator $\frac{d}{dt}(k\ast u)$ by the sequences $\{\frac{d}{dt}(k_n\ast u)\}_n$, where $k_n(t):=ns(t)=nE_\alpha(-nt^\alpha)$ (note that by using Laplace transform, we see that $s(\cdot)$ is the unique solution to the equation $s(t)+n(l\ast s)(t)=1$, $t>0$, where $l(t)=\frac{t^{\alpha-1}}{\Gamma(\alpha)}$, $t>0$, see \cite[p. 3]{Ke_2020}). We rewrite the equation \eqref{eq_ex_1} as below
\begin{equation}\label{eq_ex_2}
\langle (k_n\ast u)'(t),u(t)\rangle_{H^{-1}\times H^1_0}+a(u(t),u(t);t)=h_n(t),\;a.a.\;t\in (0,T)
\end{equation}
with
\[
h_n(t):=\langle (k_n\ast u)'(t)-(k\ast u)'(t),u(t)\rangle_{H^{-1}\times H^1_0},\;a.a.\;t\in (0,T).
\]
By virtue of \cite[Lemma 2.1]{Zacher_09},
\[
\frac{1}{2}\frac{d}{dt}(k_n\ast\|u(\cdot)\|^2_{L^2(\Omega)})(t)\leq (\frac{d}{dt}(k_n\ast u)(t),u(t))_{L^2(\Omega)},\;a.a.\;t\in (0,T).
\]
On the other hand, there exists $\nu\geq 0$ such that
\[
a(u(t),u(t);t)\geq -\nu\|u(t)\|^2_{L^2(\Omega)}.
\]
Hence, from \eqref{eq_ex_2}, we have
\[
\frac{d}{dt}(k_n\ast\|u(\cdot)\|^2_{L^2(\Omega)})(t)\leq 2\nu\|u(t)\|^2_{L^2(\Omega)}+2h_n(t),\;a.a.\;t\in (0,T).
\]
This together with the positivity of $l$ implies that
\[
l\ast\frac{d}{dt}(k_n\ast \|u(\cdot)\|^2_{L^2(\Omega)})\leq 2\nu(l\ast \|u(\cdot)\|^2_{L^2(\Omega)})(t)+2l\ast h_n(t),\; a.a.\; t\in (0,T).
\]
We will show that 
\begin{equation}\label{key_est}
\lim_{n\to\infty}h_n=0\;\text{in}\; L^1([0,T];\R).
\end{equation}
From the facts that $u\in L^2([0,T];H^{-1}(\Omega))$, $^{RL}D^\alpha_{0+}u=\frac{d}{dt}(k\ast u)\in L^2([0,T];H^{-1}(\Omega))$, \cite[Proposition 2.1, p. 293]{Zacher_08} and  \cite[Example 2.1, p. 294]{Zacher_08}, we have
\[
k\ast \|u(\cdot)\|^2_{H^{-1}(\Omega)}\in W^1_1([0,T];\R).
\]
This implies that
\[
u\in D(B_2),\; \|u(\cdot)\|^2_{H^{-1}(\Omega)}\in D(B_1),
\]
where
\[
B_1(u)=\frac{d}{dt}k\ast u,\;D(B_1)=\{u\in L^1(0,T):k\ast u\in W^1_1([0,T];\R)\},
\]
and 
\[
B_2(u)=\frac{d}{dt}k\ast u,\;D(B_2)=\{u\in L^2([0,T];H^{-1}(\Omega)):k\ast u\in W^1_2([0,T];H^{-1}(\Omega))\}.
\]
Hence, by using \cite[Estimate (18), p. 292]{Zacher_08}, we obtain
\begin{equation}\label{uniq_e}
\lim_{n\to \infty}\int_0^T \|\frac{d}{dt}[(k-k_n)\ast u](t)\|^2_{H^{-1}(\Omega)}dt=\lim_{n\to \infty}\|\frac{d}{dt}[(k-k_n)\ast u](\cdot)\|^2_{L^2([0,T];H^{-1}(\Omega))}=0.
\end{equation}
By \eqref{uniq_e} and the Holder inequality, the following estimates hold
\begin{align*}
\lim_{n\to \infty}\int_0^T|h_n(t)|dt&=\lim_{n\to \infty}\int_0^T|\langle (k_n\ast u)'(t)-(k\ast u)'(t),u(t)\rangle_{H^{-1}\times H^1_0}|dt\\
&\leq \lim_{n\to \infty}\int_0^T \|\frac{d}{dt}[(k-k_n)\ast u](t)\|_{H^{-1}(\Omega)}\|u(t)\|_{H^1_0(\Omega)}\\
&\leq \lim_{n\to \infty} \|\frac{d}{dt}[(k-k_n)\ast u](\cdot)\|_{L^2([0,T];H^{-1}(\Omega))}\|u\|_{L^2([0,T];H^1_0(\Omega))}\\
&=0,
\end{align*}
which shows that $\lim_{n\to\infty}h_n=0$ in $L^1([0,T])$.
Notice that
\begin{align*}
(l\ast k)(t)&=\frac{1}{\Gamma(\alpha)\Gamma(1-\alpha)}\int_0^t s^{\alpha-1}(t-s)^{-\alpha}ds\\
&=\frac{1}{\Gamma(\alpha)\Gamma(1-\alpha)}\int_0^1 u^{-\alpha}(1-u)^{\alpha-1}du\\
&=\frac{1}{\Gamma(\alpha)\Gamma(1-\alpha)}B(\alpha,1-\alpha)\\
&=1,\;\forall t>0,
\end{align*}
\[
l\ast\frac{d}{dt}(k_n\ast \|u(\cdot)\|^2_{L^2(\Omega)})=\frac{d}{dt}(k_n\ast l\ast\|u(\cdot)\|^2_{L^2(\Omega)})\to \frac{d}{dt}(k\ast l\ast\|u(\cdot)\|^2_{L^2(\Omega)})=\|u(\cdot)\|^2_{L^2(\Omega)}
\]
in $L^1([0,T];\R)$ as $n\to \infty$ (see \cite[Estimate (19), p. 292]{Zacher_08}) and $l\ast h_n\to 0$ in $L^1([0,T];\R)$ as $n\to \infty$, we obtain
\begin{equation}\label{f_est}
\|u(t)\|^2_{L^2(\Omega)}\leq 2 \nu(l\ast \|u(\cdot)\|^2_{L^2(\Omega)})(t),\;a.a.\; t\in (0,T).
\end{equation}
From \eqref{f_est}, using a Gronwall type inequality as in \cite[Lemma 7.1.1, p. 188]{Henry}, then $\|u(t)\|^2_{L^2(\Omega)}=0$ a.e. in $(0,T)$, that is, $u_1=u_2$. The proof is complete.
\end{proof}
%\begin{remark}
%Note that $k(t):=\frac{1}{\Gamma(1-\alpha)t^\alpha}$ for $t>0$. Moreover, for any $n\in \N$, we have
%\[
%k_n(t):=nE_\alpha(-nt^\alpha),\quad t\geq 0.
%\]
%It is obvious that 
%\[
%\lim_{n\to \infty}nE_\alpha(-nt^\alpha)=-\frac{1}{\Gamma(1-\alpha)t^\alpha}=k(t),\quad t>0.
%\]
%\end{remark}
%\begin{remark}
%To complete the proof of the uniqueness of weak solution to the problem \eqref{main_eq}--\eqref{boundary_eq} as the approach proposed above, we will show that  $h_n\to 0$ in $L^1([0,T];\R)$ as $n\to\infty$. Note that the approach in \cite{Zacher_09} can %not apply to our situation because the operator $B=\frac{d}{dt}(k\ast u)$ with domain $D(B)=\{u\in L^2([0,T];H^1_0(\Omega)):%%\frac
%{d}{dt}k\ast u\in L^2([0,T];H^{-1}(\Omega))\}$ is not m-accretive. 
%\end{remark}
\begin{remark}
The key point in the proof of the uniqueness of weak solution to the problem \eqref{main_eq}--\eqref{boundary_eq} is to show that  $h_n\to 0$ in $L^1([0,T])$ as $n\to\infty$. The approach proposed in \cite{Zacher_09} can not apply directly to this situation because the operator $B=\frac{d}{dt}(k\ast u)$ with domain $D(B)=\{u\in L^2([0,T];H^1_0(\Omega)):\frac
{d}{dt}k\ast u\in L^2([0,T];H^{-1}(\Omega))\}$ is not m-accretive.
\end{remark}
\begin{remark}
In \cite[Theorem 3.1]{Bockstal}, the author has proved the existence and uniqueness of a weak solution to a class of fractional diffusion equations with Caputo derivative. The right hand side of these equations has the same form as in the equation \eqref{main_eq} in our paper. However, to obtain this result, they need additional following assumptions
\begin{itemize}
\item $\|\partial_t f(t)\|_{H^{-1}(\Omega)}\leq Ct^{-\alpha}$;
\item $b_j=0$ ($1\leq j\leq d$), $\partial_t a_{i,j}\in L^\infty(\overline{\Omega_T})$ ($1\leq i,j\leq d$), and $\partial_t c\in L^\infty (\overline{\Omega_T})$, 
\end{itemize}
see \cite[lemmas 3.3, 3.4]{Bockstal}. In this paper, by another approach, we have studied the existence and uniqueness of a weak solution to the fractional-order equation \eqref{main_eq} (with Riemann--Liouville fractional derivative) without the assumptions as mentioned above.
\end{remark}
\begin{remark}
An important step in proving the existence of the weak solution is to show that the function $v$ in \eqref{e10} is the fractional derivative $^{RL}D^\alpha_{0+}u$ (in the weak sense) of the solution $u$. In this paper, we do that by using Definition \ref{w_sol}. This definition is inspired by the integration by the parts formula for Riemann--Liouville fractional derivatives (Theorem \ref{integration_by_parts}). Unfortunately, this property is generally not true for Caputo fractional derivatives (except for the case where these two fractional derivatives coincide). Therefore, the approach as in the present paper cannot be applied to study the existence of weak solutions to partial differential equations with Caputo fractional derivatives in time.
\end{remark}
\section*{Acknowledgement}
This research is supported by The International Center for Research and Postgraduate Training in  Mathematics--Institute of Mathematics--Vietnam Academy of Science and Technology under the Grant ICRTM01-2020.09. A part of this paper was completed while the author was a postdoc at the Vietnam Institute for Advanced Study in Mathematics (VIASM). He would like to thank VIASM for support and hospitality. The author also would like to thank Nguyen Anh Tu, Ha Duc Thai, and the anonymous referee for the valuable time, constructive suggestions, and interesting comments that have helped him improve the quality and presentation of the paper.
%    Text of article.

%    Bibliographies can be prepared with BibTeX using amsplain,
%    amsalpha, or (for "historical" overviews) natbib style.
%\bibliographystyle{amsplain}
%    Insert the bibliography data here.

\end{document}